\let\BFseries\bfseries\def\bfseries{\BFseries\mathversion{bold}} 
\newcommand{\eps}{\varepsilon}
\def\E{\mathbb{E}}
\theoremstyle{plain}
\newtheorem{thm}{Theorem}
\newtheorem{lem}[thm]{Lemma}
\theoremstyle{definition}
\newtheorem{remark}[thm]{Remark}
\renewenvironment{proof}[1][] {\smallskip \noindent {\bf Proof#1.} }{\hspace*{\fill}$\square$\medskip\par}
\def\e{\mbox{e}}
\def\P{{\bf {\mathbb{P}}}}
\def\N{{\bf {\mathbb{N}}}}
\def\R{{\bf {\mathbb{R}}}}
\def\d{\,\mbox{d}}
\def\E{\mathbb{E}}
\def\P{\mathbb{P}}
\def\PP{\mathbb{P}}
\def\EE{\mathbb{E}}
\begin{document}

\title{Persistence probabilities and a decorrelation inequality for the Rosenblatt process and Hermite processes}

\author{Frank Aurzada and Christian M\"onch}

\date{\today}

\maketitle

\begin{abstract}
We study persistence probabilities of Hermite processes. As a tool, we derive a general decorrelation inequality for the Rosenblatt process, which is reminiscent of Slepian's lemma for Gaussian processes or the FKG inequality and which may be of independent interest. This allows to compute the persistence exponent for the Rosenblatt process. For general Hermite processes, we derive upper and lower bounds for the persistence probabilites with the conjectured persistence exponent, but with non-matching boundaries.
\end{abstract}

\noindent {\bf 2010 Mathematics Subject Classification:} 60G18, 60G10 (primary); 60G12, 60G15, 60G22, 60G50 (secondary)

\vspace{0.5cm}
\noindent {\bf Keywords:} long-range dependence, persistence, random walk, Hermite process, Rosenblatt process, correlation inequality, first passage times.

\vspace{0.5cm}
\section{Introduction and results}
In this paper, we are interested in \emph{persistence probabilities} for stochastic processes, i.e., as $T\to\infty$, we study the quantity
$$F(T):=\P(\sup_{t\in [0,T]}Z_t\leq 1),$$ where $(Z_t)_{t\geq 0}$ denotes a stochastic process. The study of persistence probabilites was initiated in statistical physics and has received considerable attention there; an overview can be found in \cite{BMS13} and \cite{Maj1}. Persistence is conceived as a measure of how fast a physical system started in a disordered state returns to the equilibrium.

A survey about persistence from a mathematical point of view is given in \cite{AS}. More recently, research has been concentrated on persistence problems for non-Gaussian self-similar processes with complex dependence structures, see e.g.\ \cite{nG1,nG2} confirming \cite{MD,Maj2,Red1,ORS}. 

Our particular focus is on \emph{Hermite processes}, defined in terms of multiple Wiener-It\={o} integrals parametrised by an \emph{order} $m\in \N$ and a real number $H\in (\frac12,1)$: for $t\geq 0$, set
\begin{equation}\label{def:herm}
Y_t=Y^{(m,H)}_t := c_{H,m} \int^{'}_{\R^m} \Big(\int_{0}^t \prod_{i=1}^ m (s-x_i)_+^{\frac{H-1}{m}-\frac12}\textrm{ d}s \Big)\textrm{ d}W_{x_1}\dots\textrm{ d}W_{x_m},
\end{equation}
where $\int^{'}$ denotes integration excluding the diagonal, $(W_x)_{x\in\R}$ is standard Brownian motion, and $c_{H,m}$ is a normalising constant usually chosen to standardise the variance, but arbitrary in our context. We always work with a continuous modification.

Hermite processes play an important role in the study of long range dependence, since they arise as scaling limits of random walks with strongly correlated increments. We  sketch this connection in Section~\ref{sec:generalhermite}, cf.~\eqref{eq:hermconv}. A detailed account of the properties of $(Y_t)_{t\geq 0}$ is given e.g.\ in Chapter~3 of \cite{Tud}. For our present purpose it is sufficient to note that $(Y_t)_{t\geq 0}$ has stationary increments and is self-similar with index $H$.

It is conjectured that any $H$-self-similar process with stationary increments and finite exponential moments has \emph{persistence exponent} $1-H$, i.e.\ satisfies 
\begin{equation}\label{eq:goal}
F(T)= T^{-(1-H)+o(1)}, \quad \textrm{ as }T\to \infty.
\end{equation}

The Hermite process of order $m=1$ is fractional Brownian motion for which the exponent has been calculated in \cite{Molch} confirming a prediction from \cite{dingyang}; and refined estimates for the error function implicit in \eqref{eq:goal} are provided in \cite{A11,ab,agpp}. In contrast to fractional Brownian motion, the order $m=2$ Hermite process, i.e.\ the \emph{Rosenblatt process} $Z_t:=Y^{(2,H)}_t$, and all higher order Hermite processes are non-Gaussian. The analysis of $F(T)$ is thus a priori considerably more complicated. In particular, Slepian's inequality which is a main tool in proving \eqref{eq:goal} in the Gaussian setting cannot be applied.

The purpose of the present note is threefold:
\begin{itemize}
 \item We provide a new decorrelation inequality for the Rosenblatt processes (Theorem~\ref{thm:decorrelation}), which plays the role of Slepian's inequality in the present, non-Gaussian setting.
 \item We verify $F(T)=T^{-(1-H)+o(1)}$ for the Rosenblatt process (Theorem~\ref{thm:mainpersistence}).
 \item For general Hermite processes, we prove bounds for the persistence probability of the order $T^{-(1-H)+o(1)}$, however, unfortunately with non-matching boundaries (Theorem~\ref{thm:generalhermite}).
\end{itemize}

Our first result is a general decorrelation inequality for the Rosenblatt process. It says that one can decorrelate persistence events along a sample path of the Rosenblatt process.

\begin{thm} \label{thm:decorrelation}
 Let $(Z_t)_{t\geq 0}$ be a Rosenblatt process. Then for any $d\in \N$ and any $t_0<t_1<\ldots< t_d \in \R$ and $a_1,\ldots, a_d \in \R$ we have
$$
\P( \forall i=1,\ldots, d \sup_{t\in[t_{i-1},t_i)} Z_t - Z_{t_{i-1}} \leq a_i ) \geq \prod_{i=1}^d \P( \sup_{t\in[t_{i-1},t_i)} Z_t - Z_{t_{i-1}} \leq a_i  ).
$$
\end{thm}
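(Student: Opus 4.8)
The plan is to represent the increments of $Z$ over each block as a time integral of squared Gaussians, and thereby to reduce the statement to a positive association property. Set $\alpha:=\tfrac{H-1}{2}-\tfrac12\in(-\tfrac34,-\tfrac12)$ and $q_s(x):=(s-x)_+^{\alpha}$, so that $Z_t=c_{H,2}\,I_2(f_t)$ with the symmetric kernel $f_t(x,y)=\int_0^tq_s(x)q_s(y)\,\dd s$, where $I_2$ is the double Wiener--It\={o} integral (the paper's $\int'_{\R^2}$). Because $q_s\notin L^2(\R)$, I would first truncate: put $q_s^{\eps}(x):=(s-x)_+^{\alpha}\indi{x<s-\eps}\in L^2(\R)$, $f_t^{\eps}:=\int_0^tq_s^{\eps}\otimes q_s^{\eps}\,\dd s$ and $Z_t^{\eps}:=c_{H,2}I_2(f_t^{\eps})$; then $f_t^{\eps}\to f_t$ in $L^2(\R^2)$, hence $Z_t^{\eps}\to Z_t$ in $L^2(\P)$, and with continuous modifications and the usual second-chaos moment bounds one has $Z^{\eps}\to Z$ uniformly on compacts. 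By stochastic Fubini and the identity $I_2(g\otimes g)=W(g)^2-\|g\|_{L^2}^2$ for $g\in L^2(\R)$, one obtains for $t\in[t_{i-1},t_i)$
\[
 Z_t^{\eps}-Z_{t_{i-1}}^{\eps}=c_{H,2}\int_{t_{i-1}}^{t}\Bigl(\bigl(G_s^{\eps}\bigr)^2-\|q_s^{\eps}\|_{L^2}^2\Bigr)\,\dd s,\qquad G_s^{\eps}:=W(q_s^{\eps})=\int_\R q_s^{\eps}(x)\,\dd W_x .
\]
Two features are crucial. First, $q_s^{\eps}\ge0$, so $(G^{\eps}_s)_s$ is a centred Gaussian process with $\cov(G^{\eps}_s,G^{\eps}_{s'})=\langle q^{\eps}_s,q^{\eps}_{s'}\rangle\ge0$ for \emph{all} $s,s'$ (in fact, being a nonnegative linear functional of white noise, $G^{\eps}_s$ is, after discretising $W$, of the form $\sum_kb_{sk}\xi_k$ with $b_{sk}\ge0$ and $\xi_k$ i.i.d.\ $N(0,1)$). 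Second, the $\int_0^{t_{i-1}}$ parts cancel, so the increment of $Z^{\eps}$ over block $i$ depends on the process $(G^{\eps}_s)_s$ only through its restriction to the index interval $[t_{i-1},t_i)$, and these intervals are pairwise disjoint; the long-range dependence is thus entirely absorbed into the correlations of the single Gaussian process $G^{\eps}$.

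Next I would reduce to a finite-dimensional statement. Discretising each $[t_{i-1},t_i)$ along a finite grid and writing $U_i^{\eps}$ for the resulting maximum of $Z^{\eps}_\cdot-Z^{\eps}_{t_{i-1}}$, the display shows that $U_i^{\eps}$ is a \emph{monotone} function of the quantities $(G^{\eps}_s)^2$ over the grid points $s$ of block $i$ — non-decreasing if $c_{H,2}>0$ and non-increasing if $c_{H,2}<0$, since raising some $(G^{\eps}_s)^2$ shifts $\int_{t_{i-1}}^{t}(\cdots)\,\dd s$ monotonically for every $t\ge s$ — and the direction is the same for all $i$. Hence each $\{U_i^{\eps}\le a_i\}$ is a monotone event depending only on the block-$i$ coordinates of the family $\bigl((G^{\eps}_s)^2\bigr)_s$, and the coordinate blocks are disjoint. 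Consequently, \emph{if} that family is positively associated, then applying the defining inequality of association inductively ($\indi{U_1^{\eps}\le a_1}$ against $\indi{U_2^{\eps}\le a_2,\dots,U_d^{\eps}\le a_d}$, and so on) gives
\[
 \P\bigl(\forall i=1,\dots,d:\ U_i^{\eps}\le a_i\bigr)\ \ge\ \prod_{i=1}^d\P\bigl(U_i^{\eps}\le a_i\bigr).
\]
Refining the grid (monotone convergence, continuity of $Z^{\eps}$) and then letting $\eps\downarrow0$ (uniform convergence $Z^{\eps}\to Z$, and, to dispose of boundary atoms, first proving the bound with $a_i$ replaced by $a_i+\delta$ before sending $\delta\downarrow0$) then yields Theorem~\ref{thm:decorrelation}.

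The heart of the matter, and the step I expect to be the main obstacle, is therefore the following association statement — the non-Gaussian substitute for Slepian's lemma: \emph{if $(G_s)_{s\in S}$ is a finite centred Gaussian family with $\cov(G_s,G_{s'})\ge0$ for all $s,s'$, then $(G_s^2)_{s\in S}$ is positively associated.} For two coordinates this is elementary: conditionally on $G_1^2$, the variable $G_2^2$ is distributed as a scaled non-central $\chi^2_1$ whose non-centrality parameter is an increasing function of $G_1^2$, so the pair is positively regression dependent and hence associated. In general I would argue by induction on $|S|$, combining this conditional stochastic monotonicity with the closure of association under monotone maps and weak limits; the delicate point is that the Gaussian integration-by-parts (Pitt-type) representation of the covariance of two increasing functions of $(G_s^2)_s$ multiplies the nonnegative numbers $\cov(G_s,G_{s'})$ by sign factors $\sgn(G_s)\sgn(G_{s'})$, and one has to use the sign alignment forced by the nonnegative correlations to keep the integrand nonnegative. (Exploiting instead the nested, time-ordered structure of the $U_i^{\eps}$ by successive conditioning reduces the problem to a one-block-at-a-time Gaussian correlation inequality but does not bypass this sign issue.) Granting this lemma, the chaos representation, the reduction and the limiting arguments above are all routine.
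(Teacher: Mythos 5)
Your reduction of the block increments to quadratic functionals of a single Gaussian family is sound: the truncation, the stochastic Fubini step giving $Z_t^{\eps}-Z_{t_{i-1}}^{\eps}=c_{H,2}\int_{t_{i-1}}^{t}((G_s^{\eps})^2-\|q_s^{\eps}\|^2)\,\mathrm{d}s$, and the observation that block $i$ only sees $(G_s^{\eps})_{s\in[t_{i-1},t_i)}$ closely parallel the paper's argument, which achieves the same structure via Taqqu's non-central limit theorem, writing the discrete increments as $\sum_{j=n_{i-1}+1}^{k}(X_j^2-1)$ for a stationary Gaussian sequence. The genuine gap is the lemma you yourself flag as the main obstacle and then only ``grant'': that nonnegative covariances of a centred Gaussian family imply positive association of the squares. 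This is not a known theorem, and as stated it is false for three or more coordinates: by Eisenbaum's characterization (Ann.\ Probab.\ 2014, building on Eisenbaum--Kaspi and Griffiths--Bapat), the squared Gaussian vector $(G_s^2)_s$ is associated if and only if it is infinitely divisible, which is equivalent to a sign-adjusted inverse-M-matrix (Green function) condition on the covariance --- strictly stronger than entrywise nonnegativity. Pairwise one always has $\cov(G_s^2,G_{s'}^2)=2\,\cov(G_s,G_{s'})^2\ge 0$, which is why your $n=2$ argument succeeds and why the induction cannot be completed; the $\sgn(G_s)\sgn(G_{s'})$ factors in the Pitt formula are a real obstruction, not a technicality. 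Note also that association of the squares is strictly stronger than anything the Gaussian correlation inequality delivers: a decreasing event in the squares such as $\{\min(G_1^2,G_2^2)\le 1\}$ is a union of two slabs, symmetric but not convex.

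The gap is repairable, and the repair is exactly the paper's key tool. For the sign of $c_{H,2}$ under which the increments are convex in the squares, each of your events $\{U_i^{\eps}\le a_i\}$ is an intersection over grid points $t$ of sets of the form $\{\sum_s \Delta_s (G_s^{\eps})^2\le C_{i,t}\}$ with weights $\Delta_s\ge 0$, i.e.\ an intersection of centred ellipsoids in the Gaussian coordinates --- symmetric and convex. Royen's Gaussian correlation inequality (Lemma~\ref{CGC}) then gives $\P\bigl(\bigcap_i\{G^{\eps}\in K_i\}\bigr)\ge\prod_i\P(G^{\eps}\in K_i)$ with \emph{no} hypothesis on the covariance, so the nonnegativity of $\langle q_s^{\eps},q_{s'}^{\eps}\rangle$ that you emphasize is not needed at all. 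This is precisely how the paper argues (its sets $K_i$ are intersections of balls), after which limiting arguments of the kind you describe finish the proof.
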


We apply Theorem~\ref{thm:decorrelation} as a tool to calculate the persistence exponent of the Rosenblatt process, thus providing an instance in which \eqref{eq:goal} holds for a long-range dependent {\it non-Gaussian} process. More precisely, we prove the following:

\begin{thm} \label{thm:mainpersistence}
 Let $(Z_t)_{t\geq 0}$ be a Rosenblatt process with self-similarity index $H\in(\frac12,1)$. The following estimate for the persistence probability holds for some $c=c(H)>0$ and any sufficiently large $T$:
$$
T^{-(1-H)} (\log T)^{-c} \leq \P( \sup_{t\in[0,T]} Z_t \leq 1) \leq c T^{-(1-H)}.
$$
\end{thm}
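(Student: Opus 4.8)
\textbf{Proof proposal for Theorem~\ref{thm:mainpersistence}.}

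The plan is to obtain the upper and lower bounds separately, using the self-similarity of $(Z_t)_{t\ge 0}$ to switch between the formulation with barrier $1$ and the one with a fixed time horizon, and to reduce everything to properties of the process at integer times.

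For the \emph{upper bound}, I would use only the stationarity of increments together with a comparison argument that does not require Gaussianity. First, by self-similarity, $\P(\sup_{t\in[0,T]}Z_t\le 1)=\P(\sup_{t\in[0,1]}Z_t\le T^{-H})$ is not directly what we want; instead, it is cleaner to write $F(T)=\P(\sup_{t\in[0,T]}Z_t\le 1)$ and split $[0,T]$ into roughly $N=\lfloor T\rfloor$ unit blocks. On each block $[i-1,i)$ the event $\{\sup_{t\in[0,T]}Z_t\le 1\}$ forces $Z_{i-1}\le 1$ and also a one-step control; the standard route is to note that $F(T)\le \P(\sup_{i\le N} Z_i\le 1)$, reducing to the discrete-time stationary sequence $(Z_i)_{i\ge 1}$, which has long-range dependent but summable-after-normalisation structure with $\mathrm{Var}(Z_N)\asymp N^{2H}$. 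Then a subadditivity/renewal argument in the spirit of the fBm persistence literature (cf.~\cite{Molch,A11}) gives $F(N)\le c\, N^{-(1-H)}$: one uses that the walk $S_n:=Z_n$ has increments with the right correlation decay, so that $\P(S_1\le 1,\dots,S_N\le 1)$ is controlled by $\P(S_N\le c) $-type estimates combined with a blocking that costs only constants. The point that makes the upper bound the ``easy'' direction is that for upper bounds one may freely \emph{drop} constraints and use Markov/Chebyshev on $Z_N$, which has variance of exact order $N^{2H}$, yielding $\P(Z_N\le 1)\le \P(|Z_N|\le 1)\lesssim N^{-H}$; upgrading the exponent from $H$ to $1-H$ requires the extra blocking/independence-type input, for which Theorem~\ref{thm:decorrelation} is \emph{not} needed (only a FKG-type inequality in the opposite direction, or a direct moment computation on a geometric subsequence, suffices).

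For the \emph{lower bound}, which is where Theorem~\ref{thm:decorrelation} does the work, I would proceed as follows. Fix a geometric grid $t_i = q^i$ (or, after a logarithmic change of time, an arithmetic grid). Write the event $\{\sup_{[0,T]}Z_t\le 1\}$ and bound it from below by intersecting the block events $\{\sup_{t\in[t_{i-1},t_i)}Z_t\le a_i\}$ with a suitable choice of levels $a_i$ and an initial/boundary adjustment; since $Z$ does not have independent increments one cannot simply multiply, but Theorem~\ref{thm:decorrelation} gives exactly
$$\P\bigl(\forall i\le d:\ \sup_{t\in[t_{i-1},t_i)} Z_t - Z_{t_{i-1}}\le a_i\bigr)\ \ge\ \prod_{i=1}^d \P\bigl(\sup_{t\in[t_{i-1},t_i)} Z_t - Z_{t_{i-1}}\le a_i\bigr).$$
By stationarity of increments and self-similarity, each factor $\P(\sup_{t\in[t_{i-1},t_i)}(Z_t-Z_{t_{i-1}})\le a_i)=\P(\sup_{s\in[0,t_i-t_{i-1}]}Z_s\le a_i)=\P(\sup_{u\in[0,1]}Z_u\le a_i (t_i-t_{i-1})^{-H})$. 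Choosing the grid with $t_i-t_{i-1}\asymp t_i$ of geometric type and $a_i$ growing like a small power (or a constant multiple of $(t_i)^{H}$ times a slowly varying factor) makes each factor bounded below by a constant $p\in(0,1)$, except that one still has to pay for the global constraint $Z_t\le 1$ rather than $Z_t-Z_{t_{i-1}}\le a_i$; this is handled by also controlling the ``centering'' values $Z_{t_{i-1}}$, e.g.\ by imposing $Z_{t_i}\le -b_i$ on a sub-sequence or by a union-bound correction, at the cost of a power of $\log T$ in the number of blocks $d\asymp \log T$, since $p^{d}=T^{-c}$ with $c=-\log p/\log q$. The arithmetic has to be arranged so that $c$ comes out to be exactly $1-H$ up to the logarithmic correction: this forces the relation between the block lengths, the levels $a_i$, and the probability $p$, and is the standard (but delicate) bookkeeping.

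The main obstacle, as usual in these persistence problems, is the lower bound's requirement to control not just the \emph{increment} suprema (which Theorem~\ref{thm:decorrelation} handles) but the \emph{absolute} position of the process, i.e.\ to keep $Z_{t_{i-1}}$ from drifting upward across the geometric blocks. Concretely, one needs a quantitative lower bound of the form $\P(\sup_{u\in[0,1]}Z_u\le -x)\ge$ (not-too-small) for moderate $x>0$, or equivalently a small-deviation/lower-tail estimate for the Rosenblatt supremum, and one must feed this into the product so that the accumulated loss over the $\asymp\log T$ scales is only polylogarithmic in $T$. Establishing that the combined exponent is precisely $1-H$ — matching the conjectured value and the upper bound's exponent — rather than some larger constant, is the crux; it will rely on the exact self-similarity index entering through $(t_i-t_{i-1})^{-H}$ in each factor and a careful optimisation of $q$ (or, letting $q\to 1$, a passage that trades the constant in the exponent for the polylog slack).
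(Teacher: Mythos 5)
There is a genuine gap, and in fact the roles you assign to Theorem~\ref{thm:decorrelation} are the reverse of where it is actually needed. In the paper's argument the decorrelation inequality is the key tool for the \emph{upper} bound: the general result for stationary-increment sequences (Theorem~5 of \cite{agpp}), fed with the first-moment asymptotics $\E\max_{k\le n}Z_k\sim c\,n^{H}$ coming from self-similarity, only yields $\P(\max_{k\le n}Z_k\le -1)\le c\,n^{-(1-H)}$, i.e.\ an upper bound at boundary $-1$; to convert this into a bound at boundary $+1$ (which is what $\P(\sup_{[0,T]}Z_t\le 1)\le\P(\max_k Z_k\le 1)$ requires) one writes $\P(\max_{k\le n}Z_k\le -1)\ge\P(Z_1\le-2)\,\P(\max_{k\le n-1}Z_k-Z_1\le 1)$, and it is exactly Theorem~\ref{thm:decorrelation} that justifies this product lower bound. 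Your upper-bound sketch never confronts the boundary-level issue at all, and the one quantitative step you do write down is false: $\P(Z_N\le 1)\le\P(|Z_N|\le 1)$ does not hold, and $\P(Z_N\le 1)=\P(Z_1\le N^{-H})$ tends to a positive constant, so no Chebyshev estimate on $Z_N$ produces decay. A ``blocking that costs only constants'' for an upper bound on a long-range dependent, non-Gaussian process is precisely the step that has no off-the-shelf justification here.

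For the lower bound, your multi-scale geometric blocking via Theorem~\ref{thm:decorrelation} cannot deliver the sharp exponent: with $d\asymp\log T$ blocks each contributing a factor $p\in(0,1)$ you get $T^{-c}$ with $c=-\log p/\log q$, and there is no mechanism in your sketch forcing $c=1-H$; such crude blocking generically loses a constant in the exponent, and you acknowledge the ``careful optimisation'' without supplying it. The paper avoids this entirely: the continuous-time problem is reduced to the discrete maximum over a grid of mesh $(\log T)^{-a}$ (the oscillation inside cells being killed by a union bound and the exponential tail $\P(\sup_{(0,1]}Z_t>u)\le e^{-cu}$ from Lemma~6.3 of \cite{MO86}), and the discrete lower bound $\P(\max_{k\le n}Z_k<0)\ge n^{-(1-H)}(\log n)^{-1/2}$ is again imported from Theorem~5 of \cite{agpp}, whose hypothesis is verified by checking the sub-Gaussian lower tail $\E e^{\eps(Z_1)_-^2}<\infty$ via the density bound of \cite{VT13}. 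In particular the lower bound uses no decorrelation at all (which is why it extends to all Hermite processes in Theorem~\ref{thm:generalhermite}). The missing idea in your proposal is thus the reduction to the first-moment/lower-tail criteria of \cite{agpp}, which is what actually pins the exponent at $1-H$ on both sides.
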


The rest of this paper is structured as follows. In Section~\ref{sec:proofofthm1}, we prove Theorem~\ref{thm:decorrelation}, while in Section~\ref{sec:proofofthm2} we prove Theorem~\ref{thm:mainpersistence}. In Section~\ref{sec:generalhermite}, we obtain similar but weaker results for general Hermite processes.

\section{Proof of the decorrelation inequality}\label{sec:proofofthm1}
In this section we prove Theorem~\ref{thm:decorrelation}. We first discretise $(Z_t)_{t\geq 0}$, then the discretised version of the decorrelation inequality is a consequence of the Gaussian correlation inequality which we now recall:
\begin{lem}[Gaussian Correlation Inequality, {\cite{CGI,latalamatlak}}]\label{CGC}
Let $X\in\R^n$ be a centered Gaussian vector and $C_1,C_2\subset \R^n$ convex sets satisfying $C_i=\{-x: x\in C_i\}, i=1,2$. Then $$\P(X\in C_1\cap C_2)\geq \P(X\in C_1)\P(X\in C_2).$$
\end{lem}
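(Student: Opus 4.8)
The plan is to prove the inequality by reducing it to the case of symmetric ``boxes'' and then running a Gaussian interpolation on the covariance, following Royen's scheme in the form streamlined by Lata\l{}a and Matlak. First I would reduce to finitely many linear constraints. A symmetric closed convex set $C$ (i.e.\ $C=-C$) is the intersection of the symmetric slabs $\{x:|\langle x,u\rangle|\le h_C(u)\}$ containing it, where $h_C$ is its support function and $h_C(-u)=h_C(u)$. Approximating $C_1$ and $C_2$ from outside by finite intersections of such slabs and using continuity of the centered Gaussian measure under monotone (decreasing) limits, it suffices to treat $C_1=\{|\langle x,v_i\rangle|\le 1,\ i\le p\}$ and $C_2=\{|\langle x,w_j\rangle|\le 1,\ j\le q\}$. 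Setting $Y_i=\langle X,v_i\rangle$ and $Y_{p+j}=\langle X,w_j\rangle$ collects everything into one centered Gaussian vector $Y=(Y_1,\dots,Y_m)\in\R^m$, $m=p+q$, with covariance $\Sigma$, so the claim becomes the box inequality $\P(\forall k\,|Y_k|\le c_k)\ge \P(\forall k\le p\,|Y_k|\le c_k)\,\P(\forall k> p\,|Y_k|\le c_k)$ for a two-block partition.

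Next I would set up the interpolation. Split $\Sigma$ into the diagonal blocks $A,D$ of the two groups and the cross-block $B$, and put
\[
\Sigma(\rho)=(1-\rho)\begin{pmatrix}A&0\\0&D\end{pmatrix}+\rho\,\Sigma,\qquad \rho\in[0,1],
\]
which preserves the diagonal blocks, scales the cross-covariances by $\rho$, and, being a convex combination of two positive semidefinite matrices, is itself a valid covariance. Let $f(\rho):=\P_{\Sigma(\rho)}(\forall k\,|Y_k|\le c_k)$; then $f(1)$ is the joint box probability while at $\rho=0$ the two blocks are independent, so $f(0)$ is exactly the product of the two marginal box probabilities. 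It thus suffices to prove $f'(\rho)\ge 0$.

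The main obstacle is the sign of $f'(\rho)$. Differentiating $f(\rho)=\int_{\prod_k[-c_k,c_k]}\phi_{\Sigma(\rho)}$ and using the heat-type identity $\partial_{\Sigma_{kl}}\phi_\Sigma=\partial_{x_k}\partial_{x_l}\phi_\Sigma$ expresses $f'(\rho)$ as a sum over cross-pairs $(k,l)$ weighted by the entries $\Sigma_{kl}$, whose signs are uncontrolled; this is precisely why the inequality is delicate beyond the two-dimensional case of Pitt and cannot be closed by the naive computation. The resolution, which is the heart of Royen's argument, is to pass to the squared coordinates $(Y_1^2,\dots,Y_m^2)$: these follow a multivariate gamma distribution with Laplace transform $\det(I+T\Sigma)^{-1/2}$, $T=\mathrm{diag}(t_k)$, and the box event becomes the lower-orthant event $\{Y_k^2\le c_k^2\ \forall k\}$. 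The crucial gain is that the dependence of this distribution on the cross-block is governed by the \emph{squared} entries $\Sigma_{kl}^2\ge 0$; indeed $\mathrm{Cov}(Y_k^2,Y_l^2)=2\Sigma_{kl}^2$, so increasing $\rho$ can only strengthen positive association.

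To turn this into $f'(\rho)\ge 0$ I would invoke Royen's differential identity for the multivariate gamma density. Because the orthant box probability is invariant under sign flips of the coordinates, it depends on each cross-entry only through $\Sigma_{kl}^2$; concretely, Royen's identity expresses $\partial_{\Sigma_{kl}}$ of the gamma box integral as the entry $\Sigma_{kl}$ itself times a mixed second derivative $\partial_{u_k}\partial_{u_l}$ of the gamma density with its shape parameter raised by one, integrated over the box. Evaluated along the interpolation, the chain rule $\tfrac{d}{d\rho}\Sigma(\rho)_{kl}=\Sigma_{kl}$ together with the explicit factor $\Sigma(\rho)_{kl}=\rho\,\Sigma_{kl}$ produces the manifestly nonnegative weights $\rho\,\Sigma_{kl}^2$. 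It remains to check that each mixed-derivative term integrates to something nonnegative: integrating $\partial_{u_k}\partial_{u_l}$ over the orthant box $\prod_k[0,c_k^2]$ collapses, by the fundamental theorem of calculus, to the value at the upper corner, since the shape-raised gamma density vanishes at the lower endpoints $u_k=0$, while the upper-corner value is a product of nonnegative gamma densities and survival factors. Hence each summand of $f'(\rho)$ is a nonnegative multiple of $\Sigma_{kl}^2$, so $f'(\rho)\ge 0$, and integrating from $0$ to $1$ gives $f(1)\ge f(0)$, the desired correlation inequality. Establishing the shape-raising derivative identity and the vanishing of the lower-endpoint contributions is the one genuinely technical ingredient; the rest is reduction and bookkeeping.
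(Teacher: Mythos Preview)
The paper does not supply a proof of this lemma at all: it is quoted verbatim from the literature with citations to Royen and to Lata\l a--Matlak, and then used as a black box in the proof of Theorem~\ref{thm:decorrelation}. So there is no ``paper's own proof'' to compare against; your write-up is doing strictly more than the paper does.

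That said, your outline is a faithful summary of precisely the argument those citations point to. The reduction to symmetric slab-boxes, the two-block covariance interpolation $\Sigma(\rho)$, the passage to the squared coordinates and the multivariate gamma law with Laplace transform $\det(I+T\Sigma)^{-1/2}$, and the shape-raising derivative identity that converts the uncontrolled signs $\Sigma_{kl}$ into the nonnegative weights $\rho\,\Sigma_{kl}^2$ are exactly the steps in Royen's proof as streamlined by Lata\l a--Matlak. Your description of the boundary collapse (``shape-raised gamma density vanishes at $u_k=0$'') is a slight oversimplification of how the positivity is actually extracted in their Lemma~3, but the plan is sound and you correctly flag that identity as the one genuinely technical ingredient.
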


\begin{proof}[ of Theorem~\ref{thm:decorrelation}] By self-similarity, we may consider $(Z_t)_{t\in[0,1]}$ only. Let $X_0,X_1,X_2,\dots$ be a stationary sequence of $\mathcal{N}(0,1)$ r.v.\ with covariance function \begin{equation}
	\label{eq:cov}
	\E(X_0 X_j)=(1+j^2)^{-\frac{1-H}{2}}.
	\end{equation}
Set furthermore $\sigma:=\sqrt{H^2-\frac{H}{2}}$, $h_2(x):=x^2-1$, for $x\in\R$ and
	\begin{equation}\label{def:sums}
	S_u:= \sum_{i=0}^{\lfloor u \rfloor} h_2(X_i),\quad u\in [0,\infty).
	\end{equation}
	It is well known, see e.g.\ \cite{TaqRP}, that the sequence of processes $(\frac{\sigma}{n^H}S_{nt})_{t\in[0,1]}$ converges in law to $(Z_t)_{t\in[0,1]}$. By continuity, Theorem~\ref{thm:decorrelation} thus follows, once we establish that, for any choice of $n_0<n_1<\dots< n_d\in \N$ and $a_1,\dots,a_d\in \R$,
 \begin{equation}\label{eq:cor1}
 \begin{aligned}
 &\PP\big(\forall i=1,\dots,d \max_{k=n_{i-1}+1,\dots,n_{i}} S_k-S_{n_{i-1}}\leq a_i\big)\\
 &\geq \prod_{i=1}^d \PP\big(\max_{k=n_{i-1}+1,\dots,n_{i}} S_k-S_{n_{i-1}}\leq a_i\big).
 \end{aligned}
 \end{equation}
Inequality \eqref{eq:cor1} may be rewritten in the following way:
\[\PP \Big((X_0,\dots,X_{n_d})\in \bigcap_{i=1}^{d} K_i\Big)\geq \prod_{i=1}^d \PP\big((X_0,\dots,X_{n_d})\in K_i\big),\]
where \[K_i=\Big\{(x_0,\dots,x_{n_d})\in \R^{n_d+1}:\;\sum_{j= n_{i-1}+1}^{k}x_j^2-1\leq a_i \;\forall k=n_{i-1}+1,\dots,n_i\Big\}.\] Hence \eqref{eq:cor1} follows immediately from Lemma~\ref{CGC} upon confirming all $K_i$ to be symmetric and convex. Since $h_2$ is an even function, symmetry of the $K_i$ follows. For convexity let \[M_{k,l}(a)=\{x\in \R^{n_d+1}:\; h_2(x_k)+\dots+h_2(x_l)\leq a\},\;0\leq k\leq l\leq n_d, a\in\R.\] $M_{l,k}(a)$ is the Cartesian product of $\R^{n_d+k-l-1}$ and the Euclidean ball of radius $\sqrt{(a+l-k+1)}$ in $\R^{l-k+1}$, thus convex. Each $K_i$ may be written as an intersection of finitely many $M_{k,l}(a)$ and is therefore convex, which concludes the argument.
\end{proof}

\begin{remark}
Although the Rosenblatt process is non-Gaussian, the Gaussian Correlation Inequality \cite{CGI,latalamatlak} is crucial in establishing Theorem~\ref{thm:decorrelation}.
\end{remark}

\section{Proof of the persistence result} \label{sec:proofofthm2}
In this section, we prove Theorem~\ref{thm:mainpersistence}. As in Section~\ref{sec:proofofthm1}, we first discretise the process. We then provide the upper and lower bound in Theorem~\ref{thm:mainpersistence} separately for the discrete setting.\

\begin{proof}[ of Theorem~\ref{thm:mainpersistence}]
\noindent
{\it Step 1: Reduction to the discrete time case.}

Note that trivially
\begin{equation}\label{eqn:upperboundboundary1}
\P( \sup_{t\in[0,T]} Z_t \leq 1) \leq \P( \max_{k=1,\ldots,\lceil T\rceil} Z_k \leq 1),
\end{equation}
so that in order to establish the upper bound in the theorem we only need an upper bound for the probability of the discrete event on the right hand side.

For the lower bound, note that by self-similarity, for any $a>0$,
\begin{eqnarray}
&&\P( \sup_{t\in[0,T]} Z_t \leq 1) \notag \\
&\geq &
\P(  \max_{k=1,\ldots,\lceil T (\log T)^a \rceil} Z_{\frac{k}{(\log T)^a}} < 0, \max_{k=1,\ldots,\lceil T (\log T)^a \rceil} \sup_{t\in(\frac{k-1}{(\log T)^a},\frac{k}{(\log T)^a}]} Z_t - Z_{\frac{k-1}{(\log T)^a}} \leq 1  ) 
\notag \\
&\geq &
\P(  \max_{k=1,\ldots,\lceil T (\log T)^a \rceil} Z_{\frac{k}{(\log T)^a}} < 0) 
\notag \\
&& \qquad - \P(\max_{k=1,\ldots,\lceil T (\log T)^a \rceil} \sup_{t\in(\frac{k-1}{(\log T)^a},\frac{k}{(\log T)^a}]} Z_t - Z_{\frac{k-1}{(\log T)^a}} > 1  ). \label{eqn:twotlot}
\end{eqnarray}
If we find a lower bound for the discrete setting with boundary $0$
\begin{equation} \label{eqn:lowerbdis0}
\P(  \max_{k=1,\ldots,\lceil T (\log T)^a \rceil} Z_{\frac{k}{(\log T)^a}} < 0) = \P(  \max_{k=1,\ldots,\lceil T (\log T)^a \rceil} Z_{k} < 0)
\end{equation}
and if we are able to show that the second term in \eqref{eqn:twotlot} is of lower order we are done with the proof of Theorem~\ref{thm:mainpersistence}.

For the second part, note that
\begin{eqnarray*}
&&
\P(\max_{k=1,\ldots,\lceil T (\log T)^a \rceil} \sup_{t\in(\frac{k-1}{(\log T)^a},\frac{k}{(\log T)^a}]} Z_t - Z_{\frac{k-1}{(\log T)^a}} > 1  )
\\
&\leq & T (\log T)^a \P( \sup_{t\in(0,(\log T)^{-a}]} Z_t > 1  )
\\
&= & T (\log T)^a \P( \sup_{t\in(0,1]} Z_t > (\log T)^{a H}  ),
\end{eqnarray*}
which decays faster than polynomially for any $a>\frac1H$, by the fact that, for sufficiently large $u$,
\begin{equation}\label{eq:tailest1}
\P( \sup_{t\in(0,1]} Z_t > u ) \leq e^{- c u},
\end{equation}
 where $c$ is some constant only depending on $H$. A tail estimate for more general self-similar processes which implies \eqref{eq:tailest1} is given in Lemma~6.3 of\ \cite{MO86}. Therefore, it only remains to identify the polynomial order of decay of the right hand side term in \eqref{eqn:lowerbdis0}.

\medskip
\noindent
{\it Step 2: Upper bound for the discrete time case.}

First note that by self-similarity, for any $p\geq 1$, \begin{equation*} 
 n^{pH}\EE \big(\max_{k=1,\ldots,n} Z_{\frac kn}\big)^p \,=\, \EE \max_{k=1,\ldots,n} Z^p_k\, \leq \,\EE \sup_{0\leq t\leq 1} Z^p_{nt}\, = \, n^{pH}\EE \big(\sup_{0\leq t\leq 1} Z_t\big)^p,\end{equation*}
and hence by continuity of $Z$,
\begin{equation}\label{eq:moments}
\lim_{n\to\infty} \frac{1}{n^{pH}}\E \big(\max_{k=1,\ldots,n} Z_k\big)^p = \EE \big(\sup_{0\leq t\leq 1} Z_t\big)^p\in(0,\infty).
\end{equation}

Therefore, setting $p=1$, we get the upper bound from the first part of Theorem 5 of \cite{agpp}:
\begin{equation} \label{eqn:upperboundfrom5}
 \P( \max_{k=1,\ldots,n} Z_k \leq -1) \leq c n^{-(1-H)}.
\end{equation}
In order to transfer from the boundary $-1$ to the boundary $+1$ required in (\ref{eqn:upperboundboundary1}), we make use of our Theorem~\ref{thm:decorrelation}:
\begin{eqnarray*}
\P( \max_{k=1,\ldots,n} Z_k \leq -1)
&\geq& \P( Z_1 \leq -2, \max_{k=2,\ldots,n} Z_k - Z_1 \leq 1) 
\\
&\geq& \P( Z_1 \leq -2) \cdot \P( \max_{k=2,\ldots,n} Z_k - Z_1 \leq 1)
\\
&=& \P( Z_1 \leq -2) \cdot \P( \max_{k=1,\ldots,n-1} Z_k \leq 1).
\end{eqnarray*}
Therefore, the upper bound from (\ref{eqn:upperboundfrom5}) transfers into an upper bound for the boundary $+1$ and thus shows the upper bound in the theorem.

\medskip
\noindent
{\it Step 3: Lower bound for the discrete time case.}

Observe that choosing $p=2$ in \eqref{eq:moments} yields
$\left(\E \max_{k=1,\ldots,n} Z_k^2 \right)^{1/2} \sim c n^H$. Further, recall that on $(-\infty,0]$ the density of the Rosenblatt distribution is bounded by a multiple of the standard Gaussian density (see e.g.\ Corollary~4.4 of\ \cite{VT13}). Hence, for some $\eps>0$,
\begin{equation}\label{eq:laplbd}
\E e^{\eps (Z_1)_-^{2}} < \infty.
\end{equation}
Therefore, the second part of Theorem 5 of \cite{agpp} yields
\begin{equation}\label{eq:lowerbdfin}
\P( \max_{k=1,\ldots,n} Z_k < 0) \geq n^{-(1-H)} (\log n)^{-1/2},
\end{equation}
as required in (\ref{eqn:lowerbdis0}).
\end{proof}

\section{Results for general Hermite processes} \label{sec:generalhermite}
Finally, we deal with general Hermite processes. It turns out that the proof of the lower bound in Theorem~\ref{thm:mainpersistence} does not require Theorem~\ref{thm:decorrelation} and works for general Hermite processes. Upper bounds on the persistence probability with different boundaries can be inferred from the recent article \cite{agpp}.

\begin{thm} \label{thm:generalhermite}
Let $(Y_t)_{t\geq 0}$ be an Hermite process of order $m\in \N$ and self-similarity index $H\in(\frac12,1)$. Then, for some constant $c=c(m,H)>0$ and for all sufficiently large $T$, we have
$$
\P( \sup_{t\in[0,T]} Y_t \leq 1) \geq T^{-(1-H)} (\log T)^{-c}
$$
and
$$
\P( \sup_{t\in[1,T]} Y_t \leq -1) \leq c T^{-(1-H)}.
$$
\end{thm}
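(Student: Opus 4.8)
The plan is to recycle the structure of the proof of Theorem~\ref{thm:mainpersistence}, observing that Step~1 and Step~3 there never used the Gaussian correlation inequality (Theorem~\ref{thm:decorrelation}) — only self-similarity, stationarity of increments, the tail bound \eqref{eq:tailest1}, moment asymptotics of the type \eqref{eq:moments}, and the abstract input from \cite{agpp}. For the \emph{lower bound}, I would first reduce to discrete time exactly as in Step~1: by self-similarity, $\P(\sup_{t\in[0,T]} Y_t\le 1)$ is bounded below by $\P(\max_{k\le N} Y_{k/(\log T)^a}<0) - \P(\max_{k\le N}\sup_{t\in(\,\cdot\,)} Y_t - Y_{\cdot} > 1)$ with $N=\lceil T(\log T)^a\rceil$; the correction term is controlled by a union bound, self-similarity (turning the sup over an interval of length $(\log T)^{-a}$ into a sup over $[0,1]$ against threshold $(\log T)^{aH}$), and the exponential tail estimate from Lemma~6.3 of \cite{MO86} — which applies to general Hermite processes, being $H$-self-similar with stationary increments and finite exponential moments — so this term is $o(T^{-(1-H)})$ for $a>1/H$. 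It then remains to lower-bound $\P(\max_{k\le n} Y_k < 0)$ by $n^{-(1-H)}(\log n)^{-c}$. For this I would invoke the second part of Theorem~5 of \cite{agpp} in the same way as for \eqref{eq:lowerbdfin}: the hypotheses are that $(\E\max_{k\le n} Y_k^2)^{1/2}\sim c n^H$ (which follows from the $p=2$ analogue of \eqref{eq:moments}, valid by self-similarity and continuity) and that $\E e^{\eps (Y_1)_-^2}<\infty$ for some $\eps>0$; the latter holds because the one-dimensional density of $Y_1$ on $(-\infty,0]$ is dominated by a Gaussian density (for the Rosenblatt case this is Corollary~4.4 of \cite{VT13}; for general order $m$ one needs the corresponding domination, which for the negative half-line follows from the exponential moment bounds for Hermite-Wiener integrals / the fact that $(Y_1)_-$ has at least Gaussian decay).

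For the \emph{upper bound}, the statement is now about $\P(\sup_{t\in[1,T]} Y_t\le -1)$, i.e.\ with boundary $-1$ and starting at time $1$, which is precisely the form in which \cite{agpp} delivers a bound without any need for Theorem~\ref{thm:decorrelation} (the boundary-flipping trick in Step~2 of the previous proof was the \emph{only} place the decorrelation inequality entered, and it is not available for $m\ge 3$). Concretely, by self-similarity, $\P(\sup_{t\in[1,T]} Y_t\le -1)\le \P(\max_{k=1,\ldots,\lceil T\rceil} Y_k\le -1)$, and then the first part of Theorem~5 of \cite{agpp}, together with $\frac1{n^H}\E\max_{k\le n} Y_k \to \E\sup_{[0,1]}Y_t\in(0,\infty)$ (the $p=1$ case of the moment asymptotics), gives $\P(\max_{k\le n} Y_k\le -1)\le c\,n^{-(1-H)}$, hence the claimed bound with $T$ in place of $n$.

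The main obstacle — and really the only non-cosmetic point — is verifying the Gaussian-type domination of the left tail of $Y_1$ (equivalently $\E e^{\eps(Y_1)_-^2}<\infty$) for \emph{general} order $m$, since the cited density estimate of \cite{VT13} is stated for the Rosenblatt case $m=2$. I would handle this by an appeal to the general theory of tails of multiple Wiener–Itô integrals: a random variable in the $m$-th Wiener chaos has tails no heavier than $\exp(-c|x|^{2/m})$, but one needs the sharper one-sided statement that on the negative half-line the decay is at least Gaussian; this is available for Hermite random variables from known results on their Laplace transforms / the structure of $Y_1$ as a (generalised) quadratic-type functional, and it suffices to quote such a statement. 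A second, minor, point is to state the moment asymptotics \eqref{eq:moments} for general Hermite processes — but the derivation is verbatim the one given for the Rosenblatt process, using only $H$-self-similarity and a.s.\ continuity, so no new idea is required. Everything else is a direct transcription of Steps~1--3 of the previous proof with $Z$ replaced by $Y$.
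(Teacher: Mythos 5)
Your overall architecture coincides with the paper's proof: discretise exactly as in Step~1 of the proof of Theorem~\ref{thm:mainpersistence}, control the overshoot term via the chaos tail bound of Lemma~6.3 of \cite{MO86} (the paper's \eqref{eq:tailest2}), feed the moment asymptotics and a negative-exponential-moment condition into Theorem~5 of \cite{agpp} for the lower bound, and obtain the upper bound directly from the first part of that theorem, since with boundary $-1$ on $[1,T]$ no boundary-flipping, and hence no decorrelation inequality, is needed. The upper-bound half and the reduction steps of your proposal are correct as written.

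The genuine gap is precisely the point you yourself flag as ``the main obstacle'': the claim that $(Y_1)_-$ has at least Gaussian decay, i.e.\ that $\E e^{\eps (Y_1)_-^{2}}<\infty$ for general order $m$. This is not available from the general theory and is in fact false for odd $m\geq 3$: replacing $W$ by $-W$ in \eqref{def:herm} shows $Y_1\overset{d}{=}(-1)^m Y_1$, so for odd $m$ the law of $Y_1$ is symmetric and its left tail coincides with its right tail, which is of order $\exp(-c|x|^{2/m})$ --- strictly heavier than Gaussian once $m\geq 3$ (and lower bounds for tails of multiple Wiener--It\={o} integrals show the exponent $2/m$ is attained, so no one-sided sub-Gaussian improvement can be expected in general). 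The paper circumvents this entirely: it observes that the conclusion of Theorem~5 of \cite{agpp} persists under the weaker hypothesis $\E e^{\eps (Y_1)_-^{\beta}}<\infty$ for \emph{some} $\beta>0$, which holds for every $\beta\in(0,\frac{2}{m})$ by the two-sided bound \eqref{eq:tailest2}; the price is the lower bound $n^{-(1-H)}(\log n)^{-1/\beta}$ with a worse logarithmic power, which is harmless because the theorem only asserts $(\log T)^{-c}$ for some $c=c(m,H)$. If you replace your $\beta=2$ step by this $\beta<\frac{2}{m}$ version, the rest of your argument goes through as written; as it stands, your route relies on an unprovable (indeed false) tail estimate.
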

\begin{proof}
Let $(Y_t)_{t\geq 0}$ be an $H$-self-similiar Hermite process of order $m$. Firstly, by Lemma~6.3 in\ \cite{MO86}, we may replace \eqref{eq:tailest1} by \begin{equation}\label{eq:tailest2}
\P( \sup_{t\in(0,1]} |Y_t| > u ) \leq e^{- c u^{2/m}},
\end{equation}
for some constant $c$ depending on $m$ and $H$ only and all sufficiently large $u$. Secondly, the conclusion of Theorem 5 in \cite{agpp} remains valid, if \eqref{eq:laplbd} is replaced by \begin{equation} \label{eqn:momentsofhermite}
\E e^{\eps (Y_1)_-^{\beta}} < \infty,
\end{equation}
for some $\beta>0$. In this case the lower bound \eqref{eq:lowerbdfin} becomes
\begin{equation}\label{eq:weakbound}
\P( \max_{k=1,\ldots,n} Y_k < 0) \geq n^{-(1-H)} (\log n)^{-1/\beta}.
\end{equation}
The Hermite process satisfies (\ref{eqn:momentsofhermite}) for $\beta\in (0,\frac 2m)$, which suffices to establish the lower bound in Theorem~\ref{thm:mainpersistence} for {any} Hermite process. However, \eqref{eq:weakbound} yields a weaker estimate than the one obtained for the Rosenblatt process since no sub-Gaussianity of the lower tail is used.

The upper bound in the theorem follows directly from Theorem 5 in\ \cite{agpp} because
$$
\P( \sup_{t\in[1,T]} Z_t \leq -1) \leq \P( \max_{k=1,\ldots,\lceil T\rceil} Z_k \leq -1).
$$
\end{proof}
It is precisely the use of Theorem~\ref{thm:decorrelation} which allows us to switch from boundary $-1$ to boundary $+1$ in the proof of Theorem~\ref{thm:mainpersistence}. One may wonder where the proof of Theorem~\ref{thm:decorrelation} breaks down for general Hermite processes. We are going to sketch this now in the remaining part of the paper.

The function $h_2$ appearing in the proof of Theorem~\ref{thm:decorrelation} is in fact the second \emph{Hermite polynomial}. Recall that, for $m\in\N\cup\{0\}$, the $m$-th Hermite polynomial is given by $$h_m(x):=(-1)^m e^{x^2/2} \frac{_{\textrm{d}^m}}{^{\textrm{d} x^m}} e^{-x^2/2}, \quad x\in\R.$$ Replacing $h_2$ by another Hermite polynomial $h_m, m\geq 1$, in \eqref{def:sums} yields, after suitable normalisation, the process $(Y^{(m,H)}_t)_{t\in[0,1]}$ as scaling limit of the sums $(S_{nt})_{t\in[0,1]}, n\in\N$.

To be more precise, the following invariance principle holds for a large class of long range dependent random walks, see\ \cite{Dob79,TaqRP}: Let $\Phi$ denote the standard normal distribution and set \[L^2:=L^2(\R,\Phi)=\Big\{f:\;\int f^2\d \Phi<\infty\Big\}.\]
$L^2$ is a Hilbert space with inner product $(f,g)_{\Phi}:=\sqrt{2\pi}\int fg\d \Phi$ for $f,g\in L^2$. Since the Hermite polynomials $(h_m)_{m=0}^{\infty}$ form an orthonormal basis of ${L}^2$, every $f\in L^2$ has a unique expansion
\begin{equation}\label{eq:hermex0}
f=\sum_{i=0}^{\infty}c_i h_i,
\end{equation}
in terms of Hermite polynomials. The index of the minimal nonzero coefficient appearing in \eqref{eq:hermex0} is called \emph{Hermite rank} of $f$. Let now $(X_i)_{i=0}^{\infty}$ be \emph{any} sequence of normal r.v.\ with the same polynomial decay of covariances as in \eqref{eq:cov}, $f\in L^2$ be \emph{any} function of {Hermite rank} $m\in\N$, and define \begin{equation}\label{def:sums2}
S_u:= \sum_{i=0}^{\lfloor u \rfloor} f(X_i),\quad u\in [0,\infty).
\end{equation} Then there is a constant $\sigma>0$ such that,
\begin{equation}
\label{eq:hermconv}
\big(\frac{_\sigma}{^{n^H}}S_{nt}\big)_{t\in[0,1]}\overset{}{\rightarrow}\big(Y^{(m,H)}_t\big)_{t\in[0,1]}, \quad  \textrm{in distribution as } n\to\infty.
\end{equation}

The proof of Theorem~\ref{thm:decorrelation} and also the upper bound in Theorem~\ref{thm:mainpersistence} could immediately be transfered to all Hermite processes, if for every $m\geq 3$, there existed a convex function $f$ of Hermite rank $m$. Unfortunately, as the following calculation shows, this approach is doomed to failure.
\begin{lem} \label{lem:lemmahermiterank}
	Let $f\in{L}^2$ be a nonzero convex function. Then $f$ has Hermite rank at most $2$.
\end{lem}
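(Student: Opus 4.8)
The plan is to compute the Hermite coefficients $c_m = (f,h_m)_\Phi$ for $m\geq 3$ directly and show they all vanish whenever $f$ is convex, by exploiting the classical differentiation identity $h_m' = m\, h_{m-1}$ (equivalently $h_{m+1}(x) = x h_m(x) - m h_{m-1}(x)$) together with integration by parts against the Gaussian weight. First I would recall that $(d/dx)\,e^{-x^2/2} = -x\,e^{-x^2/2}$, so the Rodrigues-type formula defining $h_m$ gives, after $m$ integrations by parts,
\begin{equation*}
(f,h_m)_\Phi \;=\; \sqrt{2\pi}\int_\R f(x)\,h_m(x)\,\phi(x)\,\d x \;=\; \int_\R f^{(m)}(x)\,e^{-x^2/2}\,\d x
\end{equation*}
\emph{in the sense of distributions}, where $\phi$ is the standard Gaussian density; the boundary terms vanish because $f$, being in $L^2(\Phi)$ and convex, has at most linear growth and its derivatives are dominated by the Gaussian factor. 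The key point is that a convex function $f$ on $\R$ has a nondecreasing (hence distributionally first derivative of bounded variation) derivative, so $f'' \geq 0$ is a nonnegative measure, and all further distributional derivatives $f^{(m)}$ for $m\geq 3$ are derivatives of a measure.

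The main step is then: for $m\geq 3$, write $f^{(m)} = (f'')^{(m-2)}$ and integrate by parts $m-2$ more times to move these derivatives onto the smooth Gaussian-type weight, obtaining
\begin{equation*}
(f,h_m)_\Phi \;=\; (-1)^{m-2}\int_\R \Big(\frac{\d^{\,m-2}}{\d x^{m-2}}\,e^{-x^2/2}\Big)\,\d\mu(x) \;=\; (-1)^{m}\int_\R h_{m-2}(x)\,e^{-x^2/2}\,\d\mu(x),
\end{equation*}
where $\mu := f''$ is a nonnegative measure. This is not obviously zero, so the real content is the normalisation: I would instead argue that since $h_0, h_1, h_2$ already span all polynomials of degree $\leq 2$, proving the claim is equivalent to showing that the orthogonal projection of a convex $f$ onto the span of $\{h_m : m\geq 3\}$ is zero, i.e.\ that $f$ agrees with a quadratic polynomial in $L^2(\Phi)$. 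That is false in general (e.g.\ $f(x)=|x|$), so the statement must be that the \emph{lowest} nonzero coefficient has index $\leq 2$ — equivalently, that $f$ cannot be orthogonal to all of $h_0, h_1, h_2$ simultaneously unless $f\equiv 0$. Thus the correct route is to show $(f, h_0)_\Phi, (f,h_1)_\Phi, (f,h_2)_\Phi$ cannot all vanish for nonzero convex $f$: we have $(f,h_2)_\Phi = \int f''(x)\, e^{-x^2/2}\,\d x = \int e^{-x^2/2}\,\d\mu(x)$, which is strictly positive unless $\mu \equiv 0$, i.e.\ unless $f$ is affine; and if $f$ is affine and nonzero, then $(f,h_0)_\Phi$ or $(f,h_1)_\Phi$ is nonzero. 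Hence some coefficient among indices $0,1,2$ is nonzero, so the Hermite rank is at most $2$.

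The main obstacle I expect is the analytic justification of the integration-by-parts / distributional-derivative manipulations: one must check that a convex $f \in L^2(\Phi)$ is locally Lipschitz with $f'$ of locally bounded variation, that $f'' = \mu$ is a genuine (locally finite, nonnegative) Borel measure, that the growth of $f$ and $f'$ is controlled well enough that $\int f''\, e^{-x^2/2}\,\d x$ converges and no boundary terms survive (here the $L^2(\Phi)$ hypothesis forces $f$ to grow subgaussianly, and convexity then forces at most linear growth, so $f' $ is bounded on each side at infinity and the Gaussian kills everything). Once that is in place the identity $(f,h_2)_\Phi = \int e^{-x^2/2}\,\d\mu > 0$ for non-affine convex $f$ is immediate, and the affine case is trivial, completing the proof.
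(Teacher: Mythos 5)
Your proposal is correct and, after some initial meandering (the detour through general $c_m$ for $m\geq 3$, which you rightly discard), arrives at exactly the paper's argument: integrate by parts to get $(f,h_2)_\Phi=\int e^{-x^2/2}\,\d\mu$ with $\mu=f''\geq 0$ a Radon measure, conclude this vanishes only if $f$ is affine, and dispose of the affine case via $h_0,h_1$. No substantive difference from the paper's proof.
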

\begin{proof}
	Let $f\in L^2$ be convex and expand it into a Hermite series according to \eqref{eq:hermex0}, where $c_j=(f,h_j)_{\Phi}$, $j=0,1,\dots$. By convexity, $f$ is a continuous function and furthermore has a distributional second derivative $\nu\geq 0$ which is a Radon measure on $(\R,\mathcal{B}(\R))$.
	
	Assume now for contradiction that the Hermite rank of $f\neq 0$ equals $m\geq 3$, i.e.\ $c_0=c_1=c_2=0$. Using $\frac{\textrm{d}^2}{\textrm{d} u^2}\e^{-u^2/2}=h_2(u)\e^{-u^2/2}$ and integration by parts, we have
	\[c_2=(f,h_2)_{\Phi}=\int f(u)h_2(u)\e^{-\frac{u^2}{2}}\d u=\int f(u)\frac{_{\textrm{d}^2}}{^{\textrm{d} u^2}}\e^{-\frac{u^2}{2}}\d u=\int \e^{-\frac{u^2}{2}}\nu(\textrm{d} u).\] Since $\e^{-u^2/2}>0$ for all $u\in\R$, we obtain $c_2=0$ if and only if $\nu=0$. Thus $f$ must be affine, i.e.\ $f\in\textrm{span}\{h_0,h_1\}$ and $c_0=c_1=0$ means that $f$ vanishes. We conclude that our assumption cannot be satisfied.
\end{proof}



\end{document}